\begin{document}
\textheight=570pt 
\begin{center} {\Large\bf On the Non-negative Integer Solutions to Diophantine Equations ${\bf F_n - F_m = 7^a}$ and 
${\bf F_n - F_m = 13^a}$}
\vskip 1cm {\bf Gaha Anouar, Mezroui Soufiane}\\
\
\\Mathematics and Intelligent Systems (MASI)\\
National School of Applied Sciences of Tangier (ENSAT)\\
Abdelmalek Essaadi University\\
Tangier, Morocco\\
\
\\email: anouar.louis.gaha@gmail.com\\
\end{center}
\begin{center}
(Received Insert here the date of submitting your paper to the journal)
\end{center}
\begin{abstract}
In this paper, we study the solutions of the equation $F_n-F_m=p^a$ where $p$ is either $7$ or $13$ and $n>m\geqslant 0$, 
$a\geqslant 2$. We confirm the conjecture of Erduvan and Keskin by proving that there is no solutions for this Diophantine equation. We will use the lower bounds for linear forms in logarithms (Baker's theory) and a version of the Baker-Davenport reduction method in Diophantine approximation.
\end{abstract}
\begingroup
\newtheorem{theorem}{Theorem}[section]
\newtheorem{lemma}[theorem]{Lemma}
\newtheorem{proposition}[theorem]{Proposition}
\newtheorem{corollary}[theorem]{Corollary}
\newtheorem{definition}[theorem]{Definition}
\newtheorem{remark}[theorem]{Remark}
\endgroup
\def\E#1{\left\langle #1 \right\rangle}
\def\diag{\mathop{\fam 0\relax diag}\nolimits}
\section{Introduction}
The sequence of Fibonacci numbers $(F_n)_{n\geqslant 0}$ is defined recursively as $F_0=0$, $F_1=1$, and $F_n=F_{n-1}+F_{n-2}$ for all $n\geqslant 2$. The sequence of Lucas numbers $(L_n)_{n\geqslant 0}$ is similarly defined as $L_0=2$, $L_1=1$, and $L_n=L_{n-1}+L_{n-2}$ for all $n\geqslant 2$. The terms of the Fibonacci and Lucas sequences are called Fibonacci and Lucas numbers, respectively. The Fibonacci and Lucas numbers for negative indices are defined by $F_{-n}=(-1)^{n+1}F_n$ and
$L_{-n}=(-1)^{n}L_n$ for $n\geqslant 1$. 

The Fibonacci and Lucas sequences have many interesting properties and have been studied in the literature by many researchers \cite{LD}. In 2006, Bugeaud, Mignotte, and Siksek \cite{BMS6} studied the problem of finding all perfect powers in the Fibonacci sequence and the Lucas sequence by combining a classical techniques with the modular approach. In 2014, Bravo and Luca \cite{BL14} investigated the solutions of the Diophantine equation $L_n+L_m=2^a$ in non-negative integers $n$, $m$, and $a$. The Diophantine equation $F_n \pm F_m=y^a$ where $n\geqslant m\geqslant 0$, $y\geqslant 2$, and $a\geqslant 2$ has been studied by a number of the authors (see \cite{BPS,BMS6,BLMS8} for the cases $m \in \{0,1,2,3\}$). In 2018, Luca and Patel \cite{LP} showed that this Diophantine equation has solution which is either $\max \{|n|,|m|\} \leqslant 36$ or $y=0$ and $|n|=|m|$ if $n\equiv m\pmod 2$. But the case $n\not \equiv m\pmod 2$ remained an open problem. In 2019, Demirtürk and Keskin \cite{DK} found all the solutions with $y=3$ and $n\not \equiv m\pmod 2$. In the same year, Erduvan and Keskin \cite{EK} found all the solutions with $y=5$ and $n\not \equiv m\pmod 2$. The authors conjectured that the equation $F_n - F_m=y^a$ has no solutions in non-negative integers $m$ and $n$ when $y>7$ is prime and $a\geqslant 2$. In 2020, \c{S}iar and Keskin \cite{SK} studied the problem for the equation $F_n - F_m=2^a$.  

In this paper, we study the solutions in non-negative integers to the Diophantine equation 
\begin{equation} \label{eq1}
F_n - F_m=p^a 
\end{equation}
where $n>m\geqslant 0$, $p=7,13$ and $a\geqslant 2$. Specifically, we demonstrate the following two theorems.

\begin{theorem} \label{thm3}
The only solutions of the Diophantine equation $F_n - F_m=7^a$ in non-negative integers $(n,m,a)$ with $n \geqslant 2$ and $m<n$, are given by
$$(n,m,a) \in \{(2,0,0),(3,1,0),(3,2,0),(4,3,0),(6,1,1),(6,2,1)\},$$
namely
$$F_2 - F_0 = F_3 - F_1 = F_3 - F_2 = F_4 - F_3 = 7^0$$ and $$F_6 - F_1 = F_6 - F_2 = 7^1.$$
\end{theorem}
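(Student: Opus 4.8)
The plan is to run the by-now standard ``linear forms in logarithms plus Baker--Davenport reduction'' scheme, after first clearing the degenerate exponents. If $a\in\{0,1\}$, then $7^{a}=F_n-F_m\ge F_n-F_{n-1}=F_{n-2}\ge\alpha^{\,n-4}$ with $\alpha=(1+\sqrt5)/2$, so $n$ is bounded by an absolute constant and a short direct search returns exactly the four solutions with $a=0$ together with $F_6-F_1=F_6-F_2=7$. From here on assume $a\ge2$, and use the Binet formulas $F_k=(\alpha^k-\beta^k)/\sqrt5$, $\beta=(1-\sqrt5)/2$, together with $\alpha^{k-2}\le F_k\le\alpha^{k-1}$. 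From $7^{a}=F_n-F_m$ one gets $7^{a}<F_n\le\alpha^{\,n-1}$, hence $a$ is bounded linearly in $n$, and also $7^{a}\ge F_{n-2}\ge\alpha^{\,n-4}$; thus $n$ is, up to constants, the largest of the three unknowns and $\log\max\{n,m,a\}$ is comparable to $\log n$.

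First linear form. Rewriting the equation as
\[
\frac{\alpha^{n}}{\sqrt5}-7^{a}=F_m+\frac{\beta^{n}}{\sqrt5}
\]
and dividing by $7^{a}$, the right-hand side is $O(\alpha^{m})/7^{a}=O(\alpha^{-(n-m)})$ because $7^{a}\ge\alpha^{\,n-4}$. Hence $\bigl|\alpha^{n}5^{-1/2}7^{-a}-1\bigr|<c_1\alpha^{-(n-m)}$, so $|\Lambda_1|<c_1'\alpha^{-(n-m)}$ for $\Lambda_1:=n\log\alpha-a\log7-\tfrac12\log5$, and $\Lambda_1\ne0$ (otherwise $\alpha^{2n}=5\cdot7^{2a}$, impossible since $\alpha$ is a unit in $\mathbb{Z}[\alpha]$). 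Matveev's theorem on linear forms in logarithms, applied to $\alpha,7,\sqrt5$ over $\mathbb{Q}(\sqrt5)$, gives a lower bound $|\Lambda_1|>\exp\!\bigl(-C_1(1+\log n)\bigr)$, and comparing the two estimates yields $n-m<C_2\log n$ with an explicit $C_2$.

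Second linear form and an absolute bound. Set $d:=n-m$, now known to satisfy $d<C_2\log n$, and rewrite the equation as
\[
\frac{\alpha^{d}-1}{\sqrt5}\,\alpha^{m}-7^{a}=\frac{\beta^{n}-\beta^{m}}{\sqrt5}.
\]
Dividing by $7^{a}$ and using $|\beta|<1$ together with $7^{a}\ge\alpha^{\,n-4}$ gives $|\Lambda_2|<c_3\alpha^{-n}$, where $\Lambda_2:=m\log\alpha+\log\gamma_d-a\log7$ with $\gamma_d:=(\alpha^{d}-1)/\sqrt5\in\mathbb{Q}(\sqrt5)$; the key point is that $\gamma_d$ has logarithmic height $O(d)=O(\log n)$. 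Again $\Lambda_2\ne0$: otherwise $\alpha^{n}-\alpha^{m}=\sqrt5\,7^{a}$, and applying the conjugation $\sqrt5\mapsto-\sqrt5$ and adding forces $L_n=L_m$, which is impossible for $n>m$ with $n\ge2$. Matveev applied to $\alpha,\gamma_d,7$ now gives $|\Lambda_2|>\exp\!\bigl(-C_4\log n\,(1+\log n)\bigr)$, so $n<C_5(\log n)^2$, and hence $n<N_0$ for an explicit absolute constant $N_0$.

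Reduction and conclusion. As $N_0$ is far too large to test directly, reduce it by a version of the Baker--Davenport method (the Dujella--Pethő lemma): dividing $|\Lambda_1|<c_1'\alpha^{-(n-m)}$ by $\log7$ gives $|n\kappa-a+\mu|<c''\alpha^{-(n-m)}$ with $\kappa=\log\alpha/\log7$ irrational, and a continued-fraction convergent of $\kappa$ with denominator exceeding $N_0$ forces $n-m$ below a small (two-digit) bound. For each of the finitely many resulting values of $d$, the same lemma applied to $|m\kappa+\log\gamma_d/\log7-a|<c'''\alpha^{-n}$ forces $n$ (hence $m$) into a small range, after which a finite computer check over all admissible $(n,m,a)$ recovers precisely the solutions listed in the statement. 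The main obstacle is engineering the second linear form so that the height of $\gamma_d$ enters only through the already-bounded quantity $d$ — which is exactly what makes the bound $n<C_5(\log n)^2$ finite and explicit — while a secondary nuisance is checking $\Lambda_i\ne0$ and the non-degeneracy hypotheses of the reduction lemma, which may occasionally force one to pass to a later convergent or to iterate the reduction once.
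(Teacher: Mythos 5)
Your overall architecture coincides with the paper's: the same two linear forms (one giving $n-m\ll\log n$, one whose third logarithm has height $O(n-m)$ giving $n\ll(\log n)^2$), the same non-vanishing arguments, and the same two-stage Dujella--Peth\H{o} reduction. Your handling of $a\in\{0,1\}$ via $7^a\geqslant F_{n-2}$ is a harmless variant of the paper's brute-force search up to $n\leqslant 200$. However, there is one concrete step that fails as you have written it: the degenerate case $n-m=4$ in the second reduction. For $d=4$ one has the exact identity $\sqrt5\,(1-\alpha^{-4})^{-1}=\alpha^{2}$ (since $\alpha^{4}-1=\sqrt5\,\alpha^{2}$), so the shift $\mu_4=\log\bigl(\sqrt5(1-\alpha^{-4})^{-1}\bigr)/\log\alpha$ equals the integer $2$, the inequality becomes the \emph{homogeneous} form $0<|a\gamma-(n-2)|<13\alpha^{-n}$, and the quantity $\varepsilon=\|\mu q\|-M\|\gamma q\|$ is $\leqslant 0$ for \emph{every} convergent $q$. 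Your proposed remedies --- ``pass to a later convergent or iterate the reduction once'' --- therefore cannot work for this value of $d$; no choice of convergent repairs a vanishing $\|\mu q\|$.

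The paper closes this case with a different idea: for $n-m=4$ the identity $F_{m+4}-F_m=F_{m+3}+F_{m+1}=L_{m+2}$ reduces the equation to $L_{m+2}=7^{a}$ with $a\geqslant 2$, which is excluded by the Bugeaud--Mignotte--Siksek classification of perfect powers in the Lucas sequence (Theorem \ref{thm2}). Alternatively, the homogeneous inequality $0<|a\gamma-(n-2)|<13\alpha^{-n}$ can be killed by Legendre's theorem on continued fractions (it would force $(n-2)/a$ to be a convergent of $\gamma$ with an impossibly large partial quotient), but some such extra argument must be supplied; as it stands your plan has a genuine, though localized and fixable, gap at exactly this point.
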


\begin{theorem} \label{thm4}
The only solutions of the Diophantine equation $F_n - F_m=13^a$ in non-negative integers $(n,m,a)$ with $n \geqslant 2$ and $m<n$, are given by
$$(n,m,a) \in \{(2,0,0),(3,1,0),(3,2,0),(4,3,0),(7,0,1),(8,6,1),(9,8,1)\},$$
namely
$$F_2 - F_0 = F_3 - F_1 = F_3 - F_2 = F_4 - F_3 = 13^0$$ and $$F_7 - F_0=F_8 - F_6 = F_9 - F_8 = 13^1.$$
\end{theorem}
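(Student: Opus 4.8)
The plan is to prove Theorems~\ref{thm3} and~\ref{thm4} together, carrying $p\in\{7,13\}$ as a parameter, via the by-now-standard combination of lower bounds for linear forms in logarithms (Matveev) with a Baker--Davenport reduction. Throughout I would use the Binet formula $F_k=(\alpha^k-\beta^k)/\sqrt5$ with $\alpha=(1+\sqrt5)/2$, $\beta=(1-\sqrt5)/2$, together with the elementary bounds $\alpha^{k-2}\le F_k\le\alpha^{k-1}$. The aim is to establish an explicit absolute bound on $n$, push it down to a small computational range, and then enumerate: the sporadic solutions listed (those equal to $7^0$ or $13^0$, and $F_6-F_1=F_6-F_2=7^1$, $F_7-F_0=F_8-F_6=F_9-F_8=13^1$) will appear in that final enumeration, and in particular no solution with $a\ge2$ will survive. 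A preliminary remark is that $p^a<F_n\le\alpha^{n-1}$ together with $p^a=F_n-F_m\ge F_n-F_{n-1}=F_{n-2}\ge\alpha^{n-4}$ shows $a$ and $n$ are proportional up to absolute constants, so $n$ may serve as the single large parameter and $a<n$.

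For the first linear form I would rewrite \eqref{eq1} as $\alpha^n5^{-1/2}-p^a=(\beta^n+\alpha^m-\beta^m)/\sqrt5$ and divide by $p^a$, getting
\[
\bigl|\alpha^n\,5^{-1/2}\,p^{-a}-1\bigr|<c_1\,\alpha^{m-n}
\]
for an explicit $c_1$. Matveev's theorem applied to $\Lambda_1=n\log\alpha-a\log p-\log\sqrt5$ (three logarithms over $\mathbb{Q}(\sqrt5)$, all of bounded height) gives $|\Lambda_1|>\exp(-c_2\log n)$, and comparison with the upper bound forces $n-m<c_3\log n$ with $c_3$ explicit. Next, for each fixed value of $d:=n-m$, I would write \eqref{eq1} as $\alpha^m(\alpha^d-1)/\sqrt5-p^a=(\beta^n-\beta^m)/\sqrt5$, so that
\[
\Bigl|\alpha^m\,\tfrac{\alpha^d-1}{\sqrt5}\,p^{-a}-1\Bigr|<c_4\,p^{-a}<c_5\,\alpha^{-n}.
\]
Applying Matveev to $\Lambda_2=m\log\alpha+\log\frac{\alpha^d-1}{\sqrt5}-a\log p$ --- where the new algebraic number $\frac{\alpha^d-1}{\sqrt5}$ has logarithmic height $O(d)=O(\log n)$ --- yields $|\Lambda_2|>\exp(-c_6(\log n)^2)$, and comparison with the upper bound gives $n\log\alpha<c_6(\log n)^2+c_7$, hence an explicit (if huge) bound $n<N_0$. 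One checks routinely that $\Lambda_1,\Lambda_2\ne0$, since vanishing would force an algebraic identity incompatible with $n>m$.

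It remains to reduce $N_0$. With $\gamma=\log\alpha/\log p$ and the appropriate constant term $\mu$, the inequality for $\Lambda_1$ takes the shape $\lvert n\gamma-a+\mu\rvert<A\,\alpha^{-d}$; the Dujella--Peth\H{o} lemma, applied with a convergent of $\gamma$ whose denominator exceeds $6N_0$, collapses $n<N_0$ into a small bound $d\le D_0$. Looping over $d=1,\dots,D_0$, a second application of the Dujella--Peth\H{o} lemma to the corresponding inequality $\lvert n\gamma-a+\mu_d\rvert<A_d\,\alpha^{-n}$ for $\Lambda_2$ brings $n$ down to a small bound (of the order of a few hundred). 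A direct search over the resulting finite set of triples --- equivalently, testing for each small $n$ and each admissible $d$ whether $F_n-F_{n-d}$ is a power of $7$ or of $13$ --- then produces exactly the two lists, completing the proof and confirming the Erduvan--Keskin conjecture for $p=7$ and $p=13$.

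The main difficulty is not conceptual but the careful tracking of explicit constants: Matveev's theorem outputs an $N_0$ of size around $10^{30}$, and one must verify that the convergents chosen for the reduction genuinely satisfy the hypotheses of the Dujella--Peth\H{o} lemma (in particular that $\lVert Q\mu\rVert$ is not too small), be prepared to iterate the reduction, and repeat it for each of the $O(\log n)$ admissible values of $d=n-m$. A secondary point requiring care is the passage from $\lvert e^{\Lambda_i}-1\rvert$ to $\lvert\Lambda_i\rvert$ (ensuring the right-hand sides stay below $1/2$) and the clean separation of the small-index regime, where the finitely many genuine solutions --- including those with $a\in\{0,1\}$ --- actually occur.
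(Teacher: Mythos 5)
Your outline reproduces the paper's strategy essentially step for step: the same two linear forms (your $\Lambda_1,\Lambda_2$ are, up to rearrangement, the paper's $\Gamma_1$ and $\Gamma_2$, since $\alpha^m(\alpha^{d}-1)=\alpha^n(1-\alpha^{m-n})$), Matveev applied twice to get first $n-m\ll\log n$ and then an absolute bound $n<N_0\approx 7\cdot 10^{29}$, followed by a two-stage Dujella--Peth\H{o} reduction, first in $d=n-m$ and then, for each admissible $d$, in $n$, with the sporadic solutions recovered from the small-index search. Up to the reduction stage the plan is sound and coincides with the paper's.

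The genuine gap is in the endgame. For $d=n-m=4$ one has the exact identity $\sqrt5\,(1-\alpha^{-4})^{-1}=\alpha^{2}$ (because $\alpha^{4}-1=\sqrt5\,\alpha^{2}$), so the inhomogeneous term $\mu_4$ of your second reduction is the integer $2$; hence $\|q\mu_4\|=0$ for every denominator $q$, the quantity $\varepsilon=\|q\mu_4\|-M\|q\gamma\|$ is negative for every convergent, and Lemma \ref{lem1} can never be applied. Your proposed remedies --- checking that $\|Q\mu\|$ is not too small and iterating the reduction with a larger convergent --- therefore provably fail at $d=4$, and your concluding ``direct search over small $n$'' never becomes finite for that value of $d$. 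The paper closes this case by a different idea: $F_{m+4}-F_m=F_{m+3}+F_{m+1}=L_{m+2}$, so a solution would make $L_{m+2}$ a perfect power, which is excluded by the Bugeaud--Mignotte--Siksek theorem (Theorem \ref{thm2}). For $p=13$ the reduction also fails at $d=66$ and $d=88$, which the paper dispatches with the Luca--Patel factorization $F_n-F_m=F_{(n\pm m)/2}L_{(n\mp m)/2}$ (Lemma \ref{lem2}), the relevant fixed Lucas or Fibonacci factor being visibly not a power of $13$. Without these auxiliary identities (or some substitute for the degenerate values of $n-m$), your argument does not terminate.
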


\section{Preliminary results}
Let $\eta$ be an algebraic number of degree $d$ with minimal polynomial
$$\sum_{\substack{i=0}}^{d}a_i X^{d-i}=a_0\prod_{\substack{i=1}}^{d}\left(X-\eta^{(i)}\right)\in \mathbb{Z}[X],$$
where the $a_i$'s are relatively prime, $a_0>0$ and the $\eta^{(i)}$'s are the conjugates of $\eta$. The logarithmic height of
$\eta$ is given by
\begin{equation}\label{eq3}
h(\eta):=\frac{1}{d}\left(\log a_0+\sum_{\substack{i=1}}^{d}\log \left(\max\{|\eta^{(i)}|,1\}\right)\right).
\end{equation} 
In particular, if $\eta=\frac{p}{q}\in \mathbb{Q}$ where $p, q>0$ are relatively prime integers, then 
$h(\eta)= \log \left(\max\{|p|,|q|\}\right)$. The logarithmic height has the following well-known properties
\begin{equation}\label{eq4}
h(\eta_1\pm\eta_2)\leqslant h(\eta_1)+h(\eta_2)+\log(2);
\end{equation}
\begin{equation}\label{eq5}
h(\eta_1\eta_{2}^{\pm1})\leqslant h(\eta_1)+h(\eta_2);
\end{equation}
\begin{equation}\label{eq6}
h(\eta^k)=|k| h(\eta), \quad k\in \mathbb{Z}.
\end{equation}

The following theorem can be deduced from Corollary 2.3 of Matveev \cite{EMM}.
\begin{theorem} \label{thm1}
(Matveev’s theorem) Assume that $\eta_1,\eta_2,\ldots,\eta_t$ are positive real algebraic numbers in a real algebraic number field $\mathbb{K}\subset \mathbb{R}$ of degree $D$. Let $b_1,b_2,\ldots,b_t$ are rational integers such that 
$$\Gamma:=\eta_{1}^{b_1} \ldots \eta_{t}^{b_t}-1 \neq 0.$$
Then $$|\Gamma|>\exp\left(-1.4\cdot 30^{t+3}\cdot t^{4.5}\cdot D^2(1+\log(D))(1+\log(B))A_1A_2\ldots A_t\right),$$
where $$B\geqslant \max\{|b_1|,\ldots,|b_t|\},$$
and $$A_i \geqslant \max\{Dh(\eta_i),|\log(\eta_i)|,0.16\} \quad \text{for all}\quad i=1,\ldots,t.$$
\end{theorem}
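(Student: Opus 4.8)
The statement of Theorem~\ref{thm1} is not an independent transcendence result but a specialization of Corollary~2.3 of Matveev \cite{EMM}, so my plan is to derive it by transcribing that corollary under the single restriction that the ambient field be real. First I would recall Matveev's general bound in the form it is proved there: for nonzero algebraic numbers $\eta_1,\dots,\eta_t$ lying in a number field $\mathbb{K}$ of degree $D$, with rational integer exponents $b_1,\dots,b_t$ and the linear form $\Lambda=\sum_{i=1}^t b_i\log\eta_i$ nonzero, Matveev establishes a lower bound of the shape
$$|\Lambda| > \exp\bigl(-C(t,\kappa)\, D^{2}\,(1+\log D)(1+\log B)\,A_1\cdots A_t\bigr),$$
where $\kappa\in\{1,2\}$ records whether $\mathbb{K}$ is real or not, $B\geqslant\max_i|b_i|$, and each weight satisfies $A_i\geqslant\max\{Dh(\eta_i),|\log\eta_i|,0.16\}$. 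Here I would note explicitly that the weight already incorporates the term $|\log\eta_i|$; this is precisely what later legitimizes the passage from the additive form $\Lambda$ to the multiplicative form $\Gamma$ without any loss.

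The core of the deduction is then twofold. Because the hypothesis places $\mathbb{K}\subset\mathbb{R}$, I would set $\kappa=1$ and substitute this into Matveev's explicit closed form for $C(t,\kappa)$, verifying that the resulting quantity is majorized by $1.4\cdot 30^{t+3}\,t^{4.5}$, which is exactly the constant recorded in the statement; along the way I would confirm that Matveev's $\log(eD)$ and $\log(eB)$ coincide with the $(1+\log D)$ and $(1+\log B)$ used here. Next, since the $\eta_i$ are positive reals, each principal logarithm $\log\eta_i$ is real, so $\Gamma=\eta_1^{b_1}\cdots\eta_t^{b_t}-1=e^{\Lambda}-1$ is a genuine real number; in particular $\Gamma\neq 0$ forces $\Lambda\neq 0$, so Matveev's bound applies. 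The elementary inequality $|e^{\Lambda}-1|\geqslant(1-e^{-1})|\Lambda|$ valid for $|\Lambda|\leqslant 1$ then transfers the lower bound from $|\Lambda|$ to $|\Gamma|$, the constant $1-e^{-1}$ being absorbed harmlessly into the exponential (and the case $|\Lambda|>1$ being trivial since then $|\Gamma|$ is already bounded below by a positive constant).

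It remains only to check that every hypothesis of the corollary is met as stated: the $\eta_i$ are algebraic and nonzero (positivity gives this a fortiori), they lie in a common real field of degree $D$, the $b_i$ are rational integers, $B$ dominates the exponents, and the $A_i$ dominate $\max\{Dh(\eta_i),|\log\eta_i|,0.16\}$. Each of these is immediate from the phrasing of Theorem~\ref{thm1}, so no genuine mathematical difficulty arises. The only step I expect to consume real effort is the faithful transcription of the constant: Matveev's $C(t,\kappa)$ is given by a somewhat intricate expression, and one must confirm both that the real-field specialization $\kappa=1$ is applied consistently and that the numerical simplification down to $1.4\cdot 30^{t+3}\,t^{4.5}$ loses no factor. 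This bookkeeping, together with the clean additive-to-multiplicative passage, is the whole content of the deduction; the deep transcendence estimate itself is inherited from \cite{EMM}.
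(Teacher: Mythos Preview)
Your proposal is correct and mirrors the paper's approach: the paper does not prove Theorem~\ref{thm1} at all but merely records that it ``can be deduced from Corollary~2.3 of Matveev~\cite{EMM}'', and your sketch simply fills in the standard bookkeeping (real field so $\kappa=1$, verification of the constant $1.4\cdot 30^{t+3}t^{4.5}$, passage from $\Lambda$ to $\Gamma=e^{\Lambda}-1$) behind that one-line citation. There is no discrepancy in method---you have just been more explicit than the paper, which treats the statement as a black-box import.
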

The following result follows from Lemma 5 of Dujella and Peth\H{o} \cite{DP} providing a variant of Baker-Davenport reduction method. For $x\in \mathbb{R}$, we write $\|x\|:=\min\{|x-n|: n\in \mathbb{Z}\}$, where $\|\cdot\|$ denotes the distance from $x$ to the nearest integer.
 
\begin{lemma} \cite{DP} \label{lem1}
(Dujella and Peth\H{o}) Let $M>1$ be a positive integer and suppose that $\frac{p}{q}$ is a convergent of the continued fraction expansion of the irrational number $\gamma$ such that $q>6M$, and let $A$, $B$, $\mu$ be some real numbers with $A>0$ and $B>1$. Let $\varepsilon:=\|\mu q\|-M\|\gamma q\|$. If $\varepsilon>0$, then there exists no solution to the inequality
$$0<|m\gamma-n+\mu|<AB^{-\omega},$$ in positive integers $m$, $n$ and $\omega$ with $m\leqslant M$ and 
$$\omega\geqslant\frac{\log(Aq/\varepsilon)}{\log(B)}\cdot$$
\end{lemma}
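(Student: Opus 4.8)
The plan is to argue by contradiction, using nothing beyond the best–approximation property of continued‑fraction convergents. Suppose there exist positive integers $m,n,\omega$ with $m\leqslant M$, with $\omega\geqslant \log(Aq/\varepsilon)/\log(B)$, and with $0<|m\gamma-n+\mu|<AB^{-\omega}$; I will derive a lower bound for $|m\gamma-n+\mu|$ incompatible with the last inequality. The whole idea is to multiply the linear form $m\gamma-n+\mu$ by $q$ and strip off its integer part: this trades the form for an expression built from $\|\mu q\|$ and $\|\gamma q\|$, the very quantities out of which $\varepsilon$ is assembled, and that is precisely the engine of the Baker–Davenport reduction.

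First I would set $\delta:=q\gamma-p$, where $p/q$ is the given convergent. Since convergents are best approximations, $p$ is the integer closest to $q\gamma$, so $|\delta|=\|\gamma q\|$; moreover, for any convergent $\|\gamma q\|<1/q$, so the hypothesis $q>6M$ forces $M\|\gamma q\|<1/6$ — this is the practical role of that hypothesis, leaving room for $\varepsilon=\|\mu q\|-M\|\gamma q\|$ to be positive. Now I would write
$$q\,(m\gamma-n+\mu)=m\delta+(mp-nq)+\mu q ,$$
note that $mp-nq\in\mathbb{Z}$, and apply the elementary inequality $\|x\|\geqslant\|y\|-|x-y|$ with $x=q(m\gamma-n+\mu)$ and $y=(mp-nq)+\mu q$ to obtain
$$\bigl\|q\,(m\gamma-n+\mu)\bigr\|\geqslant \|\mu q\|-|m\delta|\geqslant \|\mu q\|-M\|\gamma q\|=\varepsilon>0 .$$
Since $|z|\geqslant\|z\|$ for every real $z$, this yields the lower bound $|m\gamma-n+\mu|\geqslant \varepsilon/q$.

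Finally I would combine the two bounds on $|m\gamma-n+\mu|$: from $\varepsilon/q\leqslant |m\gamma-n+\mu|<AB^{-\omega}$ one gets $B^{\omega}<Aq/\varepsilon$, and taking logarithms (legitimate since $B>1$) gives $\omega<\log(Aq/\varepsilon)/\log(B)$, contradicting $\omega\geqslant\log(Aq/\varepsilon)/\log(B)$. Hence no such $m,n,\omega$ exist. I do not anticipate a genuine obstacle here; the only points that need a careful word are the identity $\|\gamma q\|=|q\gamma-p|$ (which is exactly the best‑approximation property of the convergent $p/q$) and the fact that $m\gamma-n+\mu\neq 0$ by hypothesis, which is what keeps the strict inequalities alive after dividing through by $q$.
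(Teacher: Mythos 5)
Your argument is correct: it is the standard proof of the Baker--Davenport reduction lemma, and the key chain $\|q(m\gamma-n+\mu)\|\geqslant\|\mu q\|-m|q\gamma-p|\geqslant\|\mu q\|-M\|\gamma q\|=\varepsilon$ followed by $|m\gamma-n+\mu|\geqslant\varepsilon/q$ and the comparison with $AB^{-\omega}$ is exactly how Lemma 5 of Dujella and Peth\H{o} is established. The paper itself gives no proof (it only cites \cite{DP}), so there is nothing to contrast with; the only point worth flagging is that the identity $|q\gamma-p|=\|\gamma q\|$ uses the best-approximation property of convergents, which you correctly isolate as the one fact needing care.
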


The following result is obtained from Theorem 1 and 2 of Bugeaud, Migonette, and Siksek \cite{BMS6}.
\begin{theorem} \label{thm2}
The only perfect powers in the Fibonacci sequence are $F_0=0$, $F_1=F_2=1$, $F_6=8$, and $F_{12}=144$. The only perfect powers in the Lucas sequence are $L_1=1$ and $L_3=4$.
\end{theorem}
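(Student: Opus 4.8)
\medskip
\noindent\textit{Proof strategy.}
Since a perfect power $y^{k}$ with $k\geqslant 2$ is also a $p$-th power for some prime $p\mid k$, it suffices to find, for every prime $p$ (including $p=2$), all indices $n$ for which $F_n$ or $L_n$ is a perfect $p$-th power, and the natural plan is to split the argument according to the size of $p$. For the small exponents $p=2$ and $p=3$ I would invoke the classical determinations of the squares and cubes in the two sequences: $F_n$ is a square only for $n\in\{0,1,2,12\}$ (Cohn, Wyler) and a cube only for $n\in\{0,1,2,6\}$ (London--Finkelstein and others), while $L_n$ is a square only for $n\in\{1,3\}$ and a cube only for $n=1$; these already account for exactly the values $0,1,8,144$ and $1,4$ in the statement.

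For the remaining primes $p\geqslant 5$ the plan is to run the modular method. Using $F_{2m}=F_mL_m$, $\gcd(F_m,L_m)\in\{1,2\}$, and the Pell-type identity $L_n^{2}-5F_n^{2}=4(-1)^{n}$, a short case analysis according to the parity of $n$ (and to whether the relevant $\gcd$ equals $1$ or $2$) reduces both $F_n=y^{p}$ and $L_n=y^{p}$ either to an equation in which some $F_m$ or $L_m$ of strictly smaller index is itself a $p$-th power, or to a Lebesgue--Nagell-type ternary equation such as $x^{2}+4=5\,z^{p}$ or $z^{p}-4=5\,x^{2}$. To each such ternary equation one attaches the standard Frey elliptic curve $E$ over $\mathbb{Q}$; by Mazur's theorem its mod-$p$ Galois representation is irreducible once $p$ is large, so by the modularity of elliptic curves together with Ribet's level-lowering it must arise from a weight-$2$ newform $f$ of an explicitly computable and very small level $N$. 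Running through the (few, or nonexistent) newforms of level $N$ produces congruences $a_q(E)\equiv a_q(f)\pmod p$ at primes $q$ of good reduction, and the Hasse bound then either bounds $p$ or yields an immediate contradiction.

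The modular step leaves only a finite set of intermediate primes $p$, and for those I would fall back on Baker's method. From the Binet formula $F_n=(\alpha^{n}-\beta^{n})/\sqrt5$ with $\alpha=(1+\sqrt5)/2$, the equation $F_n=y^{p}$ gives $|n\log\alpha-p\log y-\log\sqrt5|\ll\alpha^{-2n}$; since $y^{p}\leqslant F_n<\alpha^{n}$ one can eliminate $y$ and apply Matveev's theorem (Theorem~\ref{thm1}) to obtain an effective upper bound for $n$, after which a continued-fraction reduction in the style of Lemma~\ref{lem1} and a finite search complete the Fibonacci case; the Lucas case runs in parallel. I expect the intermediate range of exponents to be the real obstacle: there the level-lowered newforms are usually not eliminated by a one-line Hasse-bound argument, so one must either sharpen the modular analysis (combining several Frey curves, or using the symplectic argument to compare the residual representations) or push the linear-forms-in-logarithms estimate and its reduction far enough to clear every surviving prime. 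The other delicate point is the bookkeeping --- controlling the proliferation of subcases coming from the residue of $n$ modulo small integers and from the two possible values of $\gcd(F_m,L_m)$.
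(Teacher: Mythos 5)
The paper does not prove this statement at all: it is imported verbatim from Theorems 1 and 2 of Bugeaud, Mignotte and Siksek \cite{BMS6}, and the ``proof'' in this paper is the citation. So the first thing to say is that you were not expected to reprove a fifty-page Annals theorem here; recognising it as the Fibonacci/Lucas perfect-power theorem and citing \cite{BMS6} is the intended argument. Your sketch does correctly name the ingredients of the actual proof --- Cohn/Wyler and London--Finkelstein for the exponents $2$ and $3$, the identity $L_n^2-5F_n^2=4(-1)^n$ leading to ternary equations, Frey curves with level-lowering, and linear forms in logarithms with a reduction step --- so as a description of the literature it is essentially accurate.

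As a proof, however, it has a genuine gap, and it sits exactly where you suspect. Your division of labour (``large $p$ by the modular method, intermediate $p$ by Baker'') does not work as stated, because the level-lowered newforms for the relevant ternary equations live at very small levels where the surviving newform corresponds to an elliptic curve that the Frey curve of the \emph{trivial} solutions is congruent to; the congruences $a_q(E)\equiv a_q(f)\pmod p$ then hold identically and the Hasse bound eliminates nothing, for any $p$. This is precisely why Bugeaud--Mignotte--Siksek could not run the two methods independently: their key innovation is to use the modular method not to bound $p$ but to force $n$ into specific residue classes modulo small integers, and then to feed that congruence information into the Baker-type lower bound and a Baker--Davenport/LLL reduction carried out class by class. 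Without that interplay the ``intermediate range'' you flag as the obstacle is not a finite cleanup but the whole theorem. So either cite \cite{BMS6} (as the paper does), or be explicit that the modular step yields congruences on $n$ rather than a bound on $p$, and that the reduction must be run over those congruence classes.
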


The following result is extracted from Lemma 2.1 of Luca and Patel \cite{LP}.
\begin{lemma} \label{lem2}
Assume that $n\equiv m\pmod 2$. Then
\begin{equation*}
F_n-F_m=\\
\begin{cases}
F_{(n-m)/2}L_{(n+m)/2} \quad \text{if} \quad n\equiv m\pmod 4,\\
F_{(n+m)/2}L_{(n-m)/2} \quad \text{if} \quad n\equiv m+2\pmod 4.
\end{cases} 
\end{equation*}
\end{lemma}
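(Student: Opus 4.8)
The plan is to derive the identity directly from Binet's formulas. Recall that $F_k=(\alpha^k-\beta^k)/\sqrt{5}$ and $L_k=\alpha^k+\beta^k$, where $\alpha=(1+\sqrt{5})/2$ and $\beta=(1-\sqrt{5})/2$ are the roots of $X^2-X-1$, so that $\alpha+\beta=1$, $\alpha-\beta=\sqrt{5}$ and, crucially, $\alpha\beta=-1$ (equivalently $\alpha^{-1}=-\beta$, $\beta^{-1}=-\alpha$). Since $n\equiv m\pmod 2$, both $x:=(n+m)/2$ and $y:=(n-m)/2$ are non-negative integers with $n=x+y$ and $m=x-y$. It therefore suffices to prove the auxiliary identity
$$F_{x+y}-F_{x-y}=\begin{cases} L_x F_y & \text{if } y \text{ is even},\\ F_x L_y & \text{if } y \text{ is odd},\end{cases}$$
and then translate the parity of $y$ into the stated congruences: $y$ even is equivalent to $n-m\equiv 0\pmod 4$, i.e. $n\equiv m\pmod 4$, while $y$ odd is equivalent to $n\equiv m+2\pmod 4$.

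To establish the auxiliary identity I would expand
$$\sqrt{5}\,(F_{x+y}-F_{x-y})=\alpha^{x}\bigl(\alpha^{y}-\alpha^{-y}\bigr)-\beta^{x}\bigl(\beta^{y}-\beta^{-y}\bigr),$$
and use $\alpha\beta=-1$ to rewrite $\alpha^{-y}=(-\beta)^{y}=(-1)^{y}\beta^{y}$ and $\beta^{-y}=(-\alpha)^{y}=(-1)^{y}\alpha^{y}$. When $y$ is even this becomes $\alpha^{x}(\alpha^{y}-\beta^{y})+\beta^{x}(\alpha^{y}-\beta^{y})=(\alpha^{x}+\beta^{x})(\alpha^{y}-\beta^{y})=\sqrt{5}\,L_xF_y$; when $y$ is odd it becomes $\alpha^{x}(\alpha^{y}+\beta^{y})-\beta^{x}(\alpha^{y}+\beta^{y})=(\alpha^{x}-\beta^{x})(\alpha^{y}+\beta^{y})=\sqrt{5}\,F_xL_y$. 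Dividing by $\sqrt{5}$ gives the two cases, and a quick numerical sanity check (e.g.\ $F_5-F_1=4=L_3F_2$ and $F_4-F_2=2=F_3L_1$) confirms the matching.

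The computation is entirely routine; the only point requiring care is the sign bookkeeping produced by $\alpha\beta=-1$ and the precise matching of ``$y$ even/odd'' with ``$n\equiv m$ resp.\ $n\equiv m+2\pmod 4$''. (One could instead argue by induction on $y$ using the Fibonacci and Lucas recurrences, but the Binet computation is the shortest route.) Finally, since $n>m\geqslant 0$ we have $x\geqslant 0$ and $y\geqslant 1$, so all indices appearing are non-negative and no extension to negative indices is needed.
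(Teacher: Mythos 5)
Your proof is correct. Note that the paper itself gives no argument for this lemma; it simply imports the statement from Lemma 2.1 of Luca and Patel \cite{LP}, so there is no in-paper proof to compare against. Your Binet-formula computation is a complete and standard derivation: writing $x=(n+m)/2$, $y=(n-m)/2$ and using $\alpha\beta=-1$ to get $\alpha^{-y}=(-1)^y\beta^y$ and $\beta^{-y}=(-1)^y\alpha^y$ correctly produces $(\alpha^x+\beta^x)(\alpha^y-\beta^y)=\sqrt{5}\,L_xF_y$ when $y$ is even and $(\alpha^x-\beta^x)(\alpha^y+\beta^y)=\sqrt{5}\,F_xL_y$ when $y$ is odd, and the translation of the parity of $y$ into the congruences $n\equiv m\pmod 4$ versus $n\equiv m+2\pmod 4$ is exact. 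This is essentially the same route one finds in the cited source (the two cases are the classical identities $F_{x+y}-(-1)^yF_{x-y}=L_xF_y$ and $F_{x+y}+(-1)^yF_{x-y}=F_xL_y$ specialized by parity), so nothing further is needed.
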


\section{Solutions to the equation (\ref{eq1}) for $p=7$}
The $n$th Fibonacci number can be represented in the form 
\begin{equation}\label{eq7}
F_n=\frac{\alpha^n-\beta^n}{\sqrt{5}} \quad \text{for all} \quad n\geqslant 0,
\end{equation}
where $\alpha:=\frac{1+\sqrt{5}}{2}$ and $\beta:=\frac{1-\sqrt{5}}{2}$. The following inequality
\begin{equation}\label{eq8}
\alpha^{n-2}\leqslant F_n \leqslant \alpha^{n-1}
\end{equation}
is true for all $n\geqslant 1$ and it can be verified by induction on $n$.

\begin{proof}[Proof of Theorem \ref{thm3}]
Suppose there are non-negative integers $n$, $m$, $a$ with $n\geq 2$ and $m<n$ satisfying $F_n - F_m=7^a$. By using the identity (\ref{eq8}), we obtain the inequality
\begin{equation} \label{eq9}
7^a=F_n - F_m<F_n \leqslant \alpha^{n-1}< \alpha^{n}.
\end{equation}
This shows that $a<n$.\\
Suppose $n-m=1$. Then $F_{m-1}=7^a$. By Theorem \ref{thm2}, we have $(n,m,a) \in \{(3,2,0),(4,3,0)\}$. Equally, in the case 
$n-m = 2$, then we get $F_{m+1}=7^a$. We have $(n,m,a) \in \{(2,0,0),(3,1,0)\}$. Let $m = 0$. By Theorem \ref{thm2}, we have that solution $F_2 - F_0 = 1$. Assume that $m \geqslant 1$ and $n-m \geqslant 3$. From $1 \leqslant m < n \leqslant 200$, a direct calculation in Maple program, we obtain the only solutions are $(n,m,a) \in \{(6,1,1),(6,2,1)\}$.\\
Assume then that $n>200$, $m \geqslant 1$ and $n-m \geqslant 3$. The formula (\ref{eq7}) can be rewritten as
$$\frac{\alpha^{n}}{\sqrt{5}} - 7^a = F_m + \frac{\beta^{n}}{\sqrt{5}},$$
then
$$\left| \frac{\alpha^{n}}{\sqrt{5}} - 7^a \right| = \left| F_m +  \frac{\beta^{n}}{\sqrt{5}} \right| \leqslant F_m + \frac{\mid \beta \mid^{n}}{\sqrt{5}} < \alpha^m + \frac{1}{2}\cdot$$
Dividing both sides of the above inequality by $\frac{\alpha^{n}}{\sqrt{5}}$ and taking into account that $n>m$, we have 
\begin{equation} \label{eq10}
\left| 1 - 7^a \alpha^{-n}\sqrt{5} \right| < \alpha^{m-n}\sqrt{5} + \frac{\alpha^{-n}\sqrt{5}}{2} = \alpha^{m-n}\sqrt{5} 
\left(1 + \frac{1}{2\alpha^{m}} \right)< \frac{4}{\alpha^{n-m}}\cdot
\end{equation}
Now, we apply Theorem \ref{thm1}, we take the parameters $t:=3$ and $\eta_1 := 7$, $\eta_2 := \alpha$, $\eta_3 := \sqrt{5}$. We also take $b_1 := a$, $b_2 := -n$ and $b_3 := 1$. Note that the three numbers $\eta_1$, $\eta_2$, and $\eta_3$ are positive real numbers and elements of the field $\mathbb{K}:=\mathbb{Q}(\sqrt{5})$, so we can take $D:=[\mathbb{K}:\mathbb{Q}]=2$. We show that $\Gamma_1 := 7^a\alpha^{-n}\sqrt{5} - 1 \neq 0$. Assume that $\Gamma_1 = 0$. We obtain $\alpha^{2n} = 5.7^{2a} \in \mathbb{Q}$, which is impossible since $\alpha^{2n} \not \in \mathbb{Q}$. Moreover, since $h(\eta_1)=\log(7)= 1.94591 \ldots$, we take $A_1:=3.9$. Since $h(\eta_2)=\frac{\log(\alpha)}{2}=0.2406\ldots$ and $h(\eta_3)=\log(\sqrt{5})=0.80471\ldots$, we can take $A_2:=0.5$ and $A_3:=1.65$. Since $a<n$, it follows that
$$B:= \max \{|b_1|,|b_2|,|b_3| \} = \max \{|a|,|-n|,1 \} = n.$$
Hence, by inequality (\ref{eq10}) and using Theorem \ref{thm1}, we have
$$\exp\left(-C(1+ \log(2))(1+ \log(n))(3.9\cdot 0.5\cdot 1.65)\right)<|\Gamma_1|<\frac{4}{\alpha^{n-m}}$$
and so 
$$(n-m)\log(\alpha) - \log(4) < C\left(1+ \log(2)\right)\left(1+ \log(n)\right)\left(3.9\cdot 0.5\cdot 1.65\right),$$
where $C=1.4\cdot 30^6 \cdot 3^{4.5}\cdot 2^2$.\\
From the last inequality, we obtain 
\begin{equation} \label{eq11}
(n-m)\log(\alpha) - \log(4)<3.12014\cdot 10^{12}\left(1+ \log(n)\right).
\end{equation}
Now, we again rewrite equation (\ref{eq1}) as
$$\frac{\alpha^n}{\sqrt{5}}-\frac{\alpha^m}{\sqrt{5}}-7^a=\frac{\beta^n}{\sqrt{5}}-\frac{\beta^m}{\sqrt{5}}$$  
and taking absolue values, we obtain
$$\left|\frac{\alpha^n(1-\alpha^{m-n})}{\sqrt{5}}-7^a\right|=\frac{|\beta|^n+|\beta|^m}{\sqrt{5}}<\frac{1}{3},$$
where we used the fact that $|\beta|^n+|\beta|^m<\frac{2}{3}$ for $n>200$. If we divide both sides of the above inequality by  
$\frac{\alpha^n(1-\alpha^{m-n})}{\sqrt{5}}$, we get
\begin{equation} \label{eq12}
\left|1-7^a\alpha^{-n}\sqrt{5}\left(1-\alpha^{m-n}\right)^{-1}\right|<\frac{\alpha^{-n}\sqrt{5}\left(1-\alpha^{m-n}\right)^{-1}}{3}\cdot
\end{equation}
Since $\alpha^{m-n}=\frac{1}{\alpha^{n-m}}<\frac{1}{\alpha}<\frac{2}{3}$, we see that $1-\alpha^{m-n}>\frac{1}{3}$ and hence 
$\left(1-\alpha^{m-n}\right)^{-1}<3$. Then from (\ref{eq12}), it follows that  
\begin{equation} \label{eq13}
\left|1-7^a\alpha^{-n}\sqrt{5}\left(1-\alpha^{m-n}\right)^{-1}\right|<\frac{\sqrt{5}}{\alpha^{n}}<\frac{3}{\alpha^{n}}\cdot
\end{equation}
We apply Theorem \ref{thm1} once again. Take $\eta_1:=7$, $\eta_2:=\alpha$, $\eta_3:=\sqrt{5}(1-\alpha^{m-n})^{-1}$, $b_1:=a$, $b_2:=-n$ and $b_3:=1$. The positive real numbers $\eta_1$, $\eta_2$, and $\eta_3$ lie in $\mathbb{K}:=\mathbb{Q}(\sqrt{5})$, so we have $D=2$. Put $\Gamma_2:=7^a\alpha^{-n}\sqrt{5}(1- \alpha^{m-n})^{-1}-1$. We show that $\Gamma_2$ is not zero. Because if        
$\Gamma_2=0$, then we get
$$\frac{\alpha^n}{\sqrt{5}}-\frac{\alpha^m}{\sqrt{5}}=7^a=F_n-F_m=\frac{\alpha^n}{\sqrt{5}}-\frac{\alpha^m}{\sqrt{5}}+\frac{\beta^m}{\sqrt{5}}-\frac{\beta^n}{\sqrt{5}},$$
which implies that $\beta^n=\beta^m$. Therefore, this is not acceptable since $n>m$. Similarly, by (\ref{eq3}), we have  
$h(\eta_1)=\log(7)=1.94591\ldots$, and $h(\eta_2)=\frac{\log(\alpha)}{2}=0.2406\ldots$, we take $A_1:=3.9$ and $A_2:=0.5$.
Bisides, by equations (\ref{eq4}), (\ref{eq5}), and (\ref{eq6}), we get that  
\begin{center}
$h(\eta_3)\leqslant \log(\sqrt{5})+|m-n|\frac{\log(\alpha)}{2}+\log(2)=\frac{1}{2}\left(\log(20)+(n-m)\log(\alpha)\right)$.
\end{center}
A simple calculation shows that $|\log(\eta_3)|<\log(5)+(n-m)\log(\alpha)$, and so we can take $A_3:=\log(20)+(n-m)\log(\alpha)$. 
If follows again that $B:=\max\{|a|,|-n|,1\}=n$, since $a<n$. Thus, from inequality (\ref{eq13}) and Theorem \ref{thm1}, we obtain
\begin{multline*}
\exp\left(-C(1+\log(2))(1+ \log(n))(3.9\cdot0.5(\log(20)+(n-m)\log(\alpha)))\right)\\
<|\Gamma_2|<\frac{\sqrt{5}}{\alpha^{n}}<\frac{3}{\alpha^{n}}
\end{multline*}
where $C=1.4\cdot 30^6 \cdot 3^{4.5}\cdot 2^2$, which yields
\begin{equation}\label{eq14}
\log(\frac{\alpha^n}{3})<1.89099\cdot 10^{12}\left(1+ \log(n)\right)\left(\log(20)+(n-m)\log(\alpha)\right).
\end{equation} 
We can now substitute inequality (\ref{eq11}), we obtain
\begin{equation*}
\begin{split}
n\log(\alpha)-\log(3)<&1.89099\cdot 10^{12}\left(1+ \log(n)\right)\\
&\cdot\left(\log(20)+3.12014\cdot 10^{12}(1+ \log(n))+\log(4)\right)
\end{split}
\end{equation*}
and so
\begin{equation*}
n<1.22613\cdot10^{25}+2.45226\cdot10^{25}\log(n)+1.22613\cdot10^{25}(\log(n))^2.
\end{equation*}
Then $$n<1.46212\cdot10^{26}\left(\log(n)\right)^2.$$
With the help of Maple program, it is seen that $$n<6.90212\cdot10^{29}.$$
Let us try to reduce the upper bound on $n$ by applying Lemma \ref{lem1} two times. Suppose  
$$\theta_1:=\log(7^a\alpha^{-n}\sqrt{5})=a\log(7)-n\log(\alpha)+\log(\sqrt{5}).$$
Then 
$$|1-e^{\theta_1}|=|1-7^a\alpha^{-n}\sqrt{5}|<\frac{4}{\alpha^{n-m}}$$
by inequality (\ref{eq10}). The inequality
$$7^a=F_n-F_m\leqslant F_n-1<F_n+\frac{\beta^n}{\sqrt{5}}=\frac{\alpha^n}{\sqrt{5}}$$
implies that $7^a\alpha^{-n}\sqrt{5}<1$. Hence, we get $\theta_1<0$. In that case, since 
$\frac{4}{\alpha^{n-m}}<0.95=\frac{19}{20}$ for $n-m\geqslant 3$, then $|1-e^{\theta_1}|=1-e^{\theta_1}<\frac{19}{20}$, which implies $e^{\theta_1}>\frac{1}{20}$. From this, it follows that $e^{|\theta_1|}=e^{-\theta_1}<20$. Therefore, since $e^x-1>x$ for $x>0$, we obtain 
$$0<|\theta_1|<e^{|\theta_1|}-1=e^{|\theta_1|}|1-e^{\theta_1}|<\frac{80}{\alpha^{n-m}}$$
to yield $$0<|a\log(7)-n\log(\alpha)+\log(\sqrt{5})|<\frac{80}{\alpha^{n-m}}\cdot$$ 
If we divide this inequality by $\log(\alpha)$, we get
\begin{equation} \label{eq15}
0<\left|a\frac{\log(7)}{\log(\alpha)}-n+\frac{\log(\sqrt{5})}{\log(\alpha)}\right|<\frac{80}{\log(\alpha)}
\alpha^{-(n-m)}\leqslant 166.3\alpha^{-(n-m)}.
\end{equation}
Now, we try to apply Lemma \ref{lem1}. Set $\gamma:=\frac{\log(7)}{\log(\alpha)}\not\in \mathbb{Q}$, $\mu:=\frac{\log(\sqrt{5})}{\log(\alpha)}$, $A:=166.3$, $B:=\alpha$, and $\omega:=n-m$. Taking $M:=6.90212\cdot 10^{29}$, we found that $q_{68}$, the denominator of the $68$th convergent of $\gamma$, $q_{68}>6M$. Moreover, a quick calculation with Maple program gives
$$\varepsilon:=\|\mu q_{68}\|-M\|\gamma q_{68}\|=0.403101\ldots>0.$$
Thus, the inequality (\ref{eq15}) has no solution for 
$$n-m=\omega\geqslant \frac{\log(Aq_{68}/\varepsilon)}{\log(B)}=161.64334\ldots.$$
By Lemma \ref{lem1}, we get 
$$\frac{\log(Aq_{68}/\varepsilon)}{\log(B)}\leqslant \frac{\log(Aq_{68}/0.403101)}{\log(B)}\leqslant 161.64335.$$
A computer search with Maple program yields to $n-m\geqslant 161.64334$. So $n-m\leqslant 161$. Substituting  this upper bound for $n-m$ into inequality (\ref{eq14}) yielding $$n<3.16222\cdot 10^{14}(1+\log(n)),$$ which in turn yields $n<1.20246\cdot 10^{16}$.\\
We apply Lemma \ref{lem1} again, now to reduce the bound for $n$. Let
\begin{equation*}
\begin{split}
\theta_2:&=\log\left(7^a\alpha^{-n}\sqrt{5}\left(1-\alpha^{m-n}\right)^{-1}\right)\\
         &=a\log(7)-n\log(\alpha)+\log\left(\sqrt{5}\left(1-\alpha^{m-n}\right)^{-1}\right).
\end{split}
\end{equation*}  
In this case, from inequality (\ref{eq13}), we have
$$|1-e^{\theta_2}|<\frac{\sqrt{5}}{\alpha^n}<\frac{3}{\alpha^n}<\frac{1}{2}\cdot$$
If $\theta_2>0$, then $0<\theta_2<e^{\theta_2}-1=|1-e^{\theta_2}|<\frac{1}{2}$, so that 
$e^{|\theta_2|}=e^{\theta_2}<\frac{3}{2}<2$. If $\theta_2<0$, then $|1-e^{\theta_2}|=1-e^{\theta_2}<\frac{1}{2}$, which implies 
$e^{\theta_2}>\frac{1}{2}$. From this, we get $e^{|\theta_2|}=e^{-\theta_2}<2$. Altogether, 
$$0<|\theta_2|<e^{|\theta_2|}-1=e^{|\theta_2|}|1-e^{\theta_2}|<\frac{6}{\alpha^n}\cdot$$  
In both cases, we obtain $$0<|\theta_2|<\frac{6}{\alpha^n}$$
to yield $$0<|a\log(7)-n\log(\alpha)+\log(\sqrt{5}(1-\alpha^{m-n})^{-1})|<\frac{6}{\alpha^{n}}\cdot$$ 
If we divide both sides by $\log(\alpha)$, we have 
\begin{equation} \label{eq16}
0<\left|a\frac{\log(7)}{\log(\alpha)}-n+\frac{\log(\sqrt{5}(1-\alpha^{m-n})^{-1})}{\log(\alpha)}\right|<\frac{6}{\log(\alpha)}\alpha^{-n}\leqslant 13\alpha^{-n}.
\end{equation}
Set $\gamma:=\frac{\log(7)}{\log(\alpha)}$ and taking $M:=1.20246\cdot 10^{16}$, we found that $q_{41}$, the denominator of the $41$th convergent of $\gamma$, $q_{41}>6M$. Also, with 
$$\mu_{n-m}:=\frac{\log(\sqrt{5}(1-\alpha^{-(n-m)})^{-1})}{\log(\alpha)},\quad A:=13,\quad B:=\alpha,\quad \omega:=n,$$ and 
$$\varepsilon=\varepsilon(\mu_{n-m}):=\|\mu_{n-m}q_{41}\|-M\|\gamma q_{41}\|\in[0.00502\ldots,0.49335\ldots]$$
for all $n-m\in [3,161]$ with $n-m\neq 4$. A quick calculation using Maple program gives
$$0.00502\leqslant \varepsilon(\mu_{n-m})\leqslant 0.49335.$$
By Lemma \ref{lem1}, we conclude that inequality (\ref{eq16}) has no solution for
$$n=\omega \geqslant \frac{\log(Aq_{41}/\varepsilon)}{\log(B)}\geqslant \frac{\log(Aq_{41}/0.005)}{\log(B)}=106.26489\ldots$$ with $n-m \neq 4$. Since we assume that a solution has $n>200$, we conclude that there are no solutions with $n-m\neq 4$. If 
$n-m=4$, we have the only possible remaining solutions in this case
$$7^a=F_n-F_m=F_{m+4}-F_m=F_{m+3}+F_{m+2}-F_{m}=F_{m+3}+F_{m+1}=L_{m+2}$$
by the identity $F_{\ell+1}+F_{\ell-1}=L_{\ell}$. By Theorem \ref{thm2}, this case is not possible.
\end{proof}

\section{Solutions to the equation (\ref{eq1}) for $p=13$}
\begin{proof}[Proof of Theorem \ref{thm4}]
Assume that equation $F_n - F_m=13^a$. Using the identity (\ref{eq8}), we obtain the inequality
\begin{equation} \label{eqt9}
13^a=F_n - F_m<F_n \leqslant \alpha^{n-1}< \alpha^{n}.
\end{equation}
This shows that $a<n$.\\
Suppose $n-m=1$. Then $F_{m-1}=13^a$. By Theorem \ref{thm2}, we have $(n,m,a) \in \{(3,2,0),(4,3,0)\}$. Equally, in the case 
$n-m = 2$, then we get $F_{m+1}=13^a$. We have $(n,m,a) \in \{(2,0,0),(3,1,0),(8,6,1)\}$. Let $m = 0$. By Theorem \ref{thm2}, we have that solutions $F_2 - F_0 = 1$ and $F_7 - F_0 = 13$, then $(n,m,a)=(7,0,1)$. Assume that $m \geqslant 1$ and $n-m \geqslant 3$. From $1 \leqslant m < n \leqslant 200$, a direct calculation in Maple program, we obtain the only solution is 
$(n,m,a)=(9,8,1)$.\\
Assume then that $n>200$, $m \geqslant 1$ and $n-m \geqslant 3$. The formula (\ref{eq7}) can be rewritten as
$$\frac{\alpha^{n}}{\sqrt{5}} - 13^a = F_m + \frac{\beta^{n}}{\sqrt{5}},$$
we obtain
$$\left| \frac{\alpha^{n}}{\sqrt{5}} - 13^a \right| = \left| F_m +  \frac{\beta^{n}}{\sqrt{5}} \right| \leqslant F_m + \frac{\mid \beta \mid^{n}}{\sqrt{5}} < \alpha^m + \frac{1}{2}\cdot$$
Dividing both sides of the above inequality by $\frac{\alpha^{n}}{\sqrt{5}}$ and taking into account that $n>m$, we have 
\begin{equation} \label{eqt10}
\begin{split}
\left|1-13^a \alpha^{-n}\sqrt{5} \right|&<\alpha^{m-n}\sqrt{5} + \frac{\alpha^{-n}\sqrt{5}}{2}\\
                                      &=\alpha^{m-n}\sqrt{5}\left(1 + \frac{\alpha^{-m}}{2} \right)< \frac{4}{\alpha^{n-m}}\cdot
\end{split}                                        
\end{equation}
Now we apply Theorem \ref{thm1}, we take the parameters $t:=3$ and $\eta_1 := 13$, $\eta_2 := \alpha$, $\eta_3 := \sqrt{5}$. We also take $b_1 := a$, $b_2 := -n$ and $b_3 := 1$. Note that the three numbers $\eta_1$, $\eta_2$, and $\eta_3$ are positive real numbers and elements of the field $\mathbb{K}:=\mathbb{Q}(\sqrt{5})$, so we can take $D:=[\mathbb{K}:\mathbb{Q}]=2$. We show that $\Gamma_1 := 13^a\alpha^{-n}\sqrt{5} - 1 \neq 0$. Assume that $\Gamma_1 = 0$. We obtain $\alpha^{2n} = 5.13^{2a} \in \mathbb{Q}$, which is impossible since $\alpha^{2n} \not \in \mathbb{Q}$. Moreover, since $h(\eta_1)=\log(13)= 2.56494\ldots$, we take $A_1:=5.15$. Since $h(\eta_2)=\frac{\log(\alpha)}{2}=0.2406\ldots$ and $h(\eta_3)=\log(\sqrt{5})=0.80471\ldots$, we can take $A_2:=0.5$ and $A_3:=1.65$. Since $a<n$, it follows that
$$B:= \max \{|b_1|,|b_2|,|b_3| \} = \max \{|a|,|-n|,1 \} = n.$$
Hence, by inequality (\ref{eqt10}) and using Theorem \ref{thm1}, we have
$$\exp\left(-C(1+ \log(2))(1+ \log(n))(5.15\cdot 0.5\cdot 1.65)\right)<|\Gamma_1|<\frac{4}{\alpha^{n-m}}$$
and so 
$$(n-m)\log(\alpha) - \log(4) < C\left(1+ \log(2)\right)\left(1+ \log(n)\right)\left(5.15\cdot 0.5\cdot 1.65\right),$$
where $C=1.4\cdot 30^6 \cdot 3^{4.5}\cdot 2^2$.\\
From the last inequality, we obtain 
\begin{equation} \label{eqt11}
(n-m)\log(\alpha) - \log(4)<4.12019\cdot 10^{12}\left(1+ \log(n)\right).
\end{equation}
We again rewrite equation (\ref{eq1}) as
$$\frac{\alpha^n}{\sqrt{5}}-\frac{\alpha^m}{\sqrt{5}}-13^a=\frac{\beta^n}{\sqrt{5}}-\frac{\beta^m}{\sqrt{5}}$$  
and taking absolue values, we obtain
$$\left|\frac{\alpha^n(1-\alpha^{m-n})}{\sqrt{5}}-13^a\right|=\frac{|\beta|^n+|\beta|^m}{\sqrt{5}}<\frac{1}{3},$$
where we used the fact that $|\beta|^n+|\beta|^m<\frac{2}{3}$ for $n>200$. If we divide both sides of the above inequality by  
$\frac{\alpha^n(1-\alpha^{m-n})}{\sqrt{5}}$, we get
\begin{equation} \label{eqt12}
\left|1-13^a\alpha^{-n}\sqrt{5}\left(1-\alpha^{m-n}\right)^{-1}\right|<\frac{\alpha^{-n}\sqrt{5}\left(1-\alpha^{m-n}\right)^{-1}}{3}\cdot
\end{equation}
Since $\alpha^{m-n}=\frac{1}{\alpha^{n-m}}<\frac{1}{\alpha}<\frac{2}{3}$, we see that $1-\alpha^{m-n}>\frac{1}{3}$ and hence 
$\left(1-\alpha^{m-n}\right)^{-1}<3$. Then from (\ref{eqt12}), it follows that  
\begin{equation} \label{eqt13}
\left|1-13^a\alpha^{-n}\sqrt{5}\left(1-\alpha^{m-n}\right)^{-1}\right|<\frac{\sqrt{5}}{\alpha^{n}}<\frac{3}{\alpha^{n}}\cdot
\end{equation}
We apply Theorem \ref{thm1} once again. Take $\eta_1:=13$, $\eta_2:=\alpha$, $\eta_3:=\sqrt{5}(1-\alpha^{m-n})^{-1}$, $b_1:=a$, $b_2:=-n$ and $b_3:=1$. The positive real numbers $\eta_1$, $\eta_2$, and $\eta_3$ lie in $\mathbb{K}:=\mathbb{Q}(\sqrt{5})$, so we have $D=2$. Put $\Gamma_2:=13^a\alpha^{-n}\sqrt{5}(1- \alpha^{m-n})^{-1}-1$. We show that $\Gamma_2$ is not zero. Because if $\Gamma_2=0$, then we get
$$\frac{\alpha^n}{\sqrt{5}}-\frac{\alpha^m}{\sqrt{5}}=13^a=F_n-F_m=\frac{\alpha^n}{\sqrt{5}}-\frac{\alpha^m}{\sqrt{5}}+\frac{\beta^m}{\sqrt{5}}-\frac{\beta^n}{\sqrt{5}},$$
which implies that $\beta^n=\beta^m$. Therefore, this is not acceptable since $n>m$. Similarly, by equation (\ref{eq3}), we have $h(\eta_1)=\log(13)=2.56494\ldots$, and $h(\eta_2)=\frac{\log(\alpha)}{2}=0.2406\ldots$, we take $A_1:=5.15$ and $A_2:=0.5$.
Bisides, by equations (\ref{eq4}), (\ref{eq5}), and (\ref{eq6}), we get that  
\begin{center}
$h(\eta_3)\leqslant \log(\sqrt{5})+|m-n|\frac{\log(\alpha)}{2}+\log(2)=\frac{1}{2}\left(\log(20)+(n-m)\log(\alpha)\right)$.
\end{center}
A simple calculation shows that $|\log(\eta_3)|<\log(5)+(n-m)\log(\alpha)$, and so we can take $A_3:=\log(20)+(n-m)\log(\alpha)$. 
If follows again that $B:=\max\{|a|,|-n|,1\}=n$, since $a<n$. Thus, from inequality (\ref{eqt13}) and Theorem \ref{thm1}, we obtain
\begin{multline*}
\exp\left(-C(1+\log(2))(1+ \log(n))(5.15\cdot 0.5(\log(20)+(n-m)\log(\alpha)))\right)\\
<|\Gamma_2|<\frac{\sqrt{5}}{\alpha^{n}}<\frac{3}{\alpha^{n}}
\end{multline*}
with $C=1.4\cdot 30^6\cdot3^{4.5}\cdot2^2$, which yields
\begin{equation}\label{eqt14}
\log(\frac{\alpha^n}{3})<2.49708\cdot 10^{12}\left(1+ \log(n)\right)\left(\log(20)+(n-m)\log(\alpha)\right).
\end{equation} 
We can now substitute inequality (\ref{eqt11}), we obtain
\begin{equation*}
\begin{split}
n\log(\alpha)-\log(3)<&2.49708\cdot 10^{12}\left(1+ \log(n)\right)\\
                                                &\cdot\left(\log(20)+4.12019\cdot 10^{12}(1+ \log(n))+\log(4)\right)                                                                                    
\end{split}
\end{equation*}
and so
\begin{equation*}
n<2.13803\cdot10^{25}+4.27606\cdot10^{25}\log(n)+2.13803\cdot10^{25}(\log(n))^2.
\end{equation*}
Then $$n<1.55331\cdot10^{26}\left(\log(n)\right)^2.$$
With the help of Maple program, it is seen that $$n<7.34589\cdot10^{29}.$$
Let us try to reduce the upper bound on $n$ by applying Lemma \ref{lem1} two times. Suppose  
$$\theta_1:=\log(13^a\alpha^{-n}\sqrt{5})=a\log(13)-n\log(\alpha)+\log(\sqrt{5}).$$
Then 
$$|1-e^{\theta_1}|=|1-13^a\alpha^{-n}\sqrt{5}|<\frac{4}{\alpha^{n-m}}$$
by inequality (\ref{eqt10}). The inequality
$$13^a=F_n-F_m\leqslant F_n-1<F_n+\frac{\beta^n}{\sqrt{5}}=\frac{\alpha^n}{\sqrt{5}}$$
implies that $13^a\alpha^{-n}\sqrt{5}<1$. Hence, we get $\theta_1<0$. In that case, since 
$\frac{4}{\alpha^{n-m}}<0.95=\frac{19}{20}$ for $n-m\geqslant 3$, then $|1-e^{\theta_1}|=1-e^{\theta_1}<\frac{19}{20}$, which implies $e^{\theta_1}>\frac{1}{20}$. From this, it follows that $e^{|\theta_1|}=e^{-\theta_1}<20$. Therefore, since $e^x-1>x$ for $x>0$, we obtain 
$$0<|\theta_1|<e^{|\theta_1|}-1=e^{|\theta_1|}|1-e^{\theta_1}|<\frac{80}{\alpha^{n-m}}$$
to yield $$0<|a\log(13)-n\log(\alpha)+\log(\sqrt{5})|<\frac{80}{\alpha^{n-m}}\cdot$$ 
If we divide this inequality by $\log(\alpha)$, we get
\begin{equation} \label{eqt15}
0<\left|a\frac{\log(13)}{\log(\alpha)}-n+\frac{\log(\sqrt{5})}{\log(\alpha)}\right|<\frac{80}{\log(\alpha)}
\alpha^{-(n-m)}\leqslant 166.3\alpha^{-(n-m)}.
\end{equation}
Now, we try to apply Lemma \ref{lem1}. Set $\gamma:=\frac{\log(13)}{\log(\alpha)}\not\in \mathbb{Q}$, $\mu:=\frac{\log(\sqrt{5})}{\log(\alpha)}$, $A:=166.3$, $B:=\alpha$, and $\omega:=n-m$. Taking $M:=7.34589\cdot 10^{29}$, we found that $q_{64}$, the denominator of the $64$th convergent of $\gamma$, $q_{64}$, exceeds $6M$. Moreover, a quick calculation with Maple program gives
$$\varepsilon:=\|\mu q_{64}\|-M\|\gamma q_{64}\|=0.29693\ldots>0.$$
Thus, the inequality (\ref{eqt15}) has no solution for 
$$n-m=\omega\geqslant \frac{\log(Aq_{64}/\varepsilon)}{\log(B)}=160.74358\ldots.$$
By Lemma \ref{lem1}, we get 
$$\frac{\log(Aq_{64}/\varepsilon)}{\log(B)}\leqslant \frac{\log(Aq_{64}/0.29693)}{\log(B)}\leqslant 160.74360.$$
A computer search with Maple program yields to $n-m\geqslant 160.74358$. So $n-m\leqslant 160$. Substituting  this upper bound for $n-m$ into inequality (\ref{eqt14}) yielding $$n<4.15078\cdot 10^{14}(1+\log(n)),$$ which in turn yields $n<1.58996\cdot 10^{16}$.\\
We apply Lemma \ref{lem1} again, now to reduce the bound for $n$. Let
\begin{equation*}
\begin{split}
\theta_2:&=\log\left(13^a\alpha^{-n}\sqrt{5}\left(1-\alpha^{m-n}\right)^{-1}\right)\\
         &=a\log(13)-n\log(\alpha)+\log\left(\sqrt{5}\left(1-\alpha^{m-n}\right)^{-1}\right).
\end{split}
\end{equation*}   
In this case, from inequality (\ref{eqt13}), we have
$$|1-e^{\theta_2}|<\frac{\sqrt{5}}{\alpha^n}<\frac{3}{\alpha^n}<\frac{1}{2}\cdot$$
If $\theta_2>0$, then $0<\theta_2<e^{\theta_2}-1=|1-e^{\theta_2}|<\frac{1}{2}$, so that 
$e^{|\theta_2|}=e^{\theta_2}<\frac{3}{2}<2$. If $\theta_2<0$, then $|1-e^{\theta_2}|=1-e^{\theta_2}<\frac{1}{2}$, which implies 
$e^{\theta_2}>\frac{1}{2}$. From this, we get $e^{|\theta_2|}=e^{-\theta_2}<2$. Altogether, 
$$0<|\theta_2|<e^{|\theta_2|}-1=e^{|\theta_2|}|1-e^{\theta_2}|<\frac{6}{\alpha^n}\cdot$$  
In both cases, we obtain $$0<|\theta_2|<\frac{6}{\alpha^n}$$
and so $$0<|a\log(13)-n\log(\alpha)+\log(\sqrt{5}(1-\alpha^{m-n})^{-1})|<\frac{6}{\alpha^{n}}\cdot$$ 
If we divide both sides by $\log(\alpha)$, we have 
\begin{equation} \label{eqt16}
0<\left|a\frac{\log(13)}{\log(\alpha)}-n+\frac{\log(\sqrt{5}(1-\alpha^{m-n})^{-1})}{\log(\alpha)}\right|<\frac{6}{\log(\alpha)}\alpha^{-n}\leqslant 13\alpha^{-n}.
\end{equation}
Set $\gamma:=\frac{\log(13)}{\log(\alpha)}$ and taking $M:=1.58996\cdot 10^{16}$, we found that $q_{33}$, the denominator of the $33$th convergent of $\gamma$, $q_{33}$, exceeds $6M$. Also, with 
$$\mu_{n-m}:=\frac{\log(\sqrt{5}(1-\alpha^{-(n-m)})^{-1})}{\log(\alpha)},\quad A:=13,\quad B:=\alpha,\quad \omega:=n,$$ and 
$$\varepsilon=\varepsilon(\mu_{n-m}):=\|\mu_{n-m}q_{33}\|-M\|\gamma q_{33}\|\in[0.00412\ldots,0.45321\ldots]$$
for all $n-m\in [3,160]$ with $n-m\neq 4,66,88$. A quick calculation using Maple program gives
$$0.00412\leqslant \varepsilon(\mu_{n-m})\leqslant 0.45321.$$
By Lemma \ref{lem1}, we conclude that inequality (\ref{eqt16}) has no solution for
$$n=\omega \geqslant \frac{\log(Aq_{33}/\varepsilon)}{\log(B)}\geqslant \frac{\log(Aq_{33}/0.004)}{\log(B)}=100.22156\ldots$$ with $n-m \neq 4,66,88$. Since we assume here that $n>200$, we conclude that there are no solutions where $n-m \neq 4,66,88$. Now consider the cases $n-m=4,66,88$. If $n-m=4$, then the only possible remaining solutions in case yields
$$13^a=F_n-F_m=F_{m+4}-F_m=F_{m+3}+F_{m+2}-F_{m}=F_{m+3}+F_{m+1}=L_{m+2}$$
by the identity $F_{\ell+1}+F_{\ell-1}=L_{\ell}$. By Theorem \ref{thm2}, this case is impossible.
If $n-m=66$, then we have the equation $13^a=F_n-F_m=F_{m+33}L_{33}$ by Lemma \ref{lem2}, which is not possible. Lastly, assume that $n-m=88$. Then the equation $13^a=F_n-F_m=F_{44}L_{m+44}$ by Lemma \ref{lem2}, which is not possible.   
\end{proof}


\begin{thebibliography}{99}
\bibitem{BPS} M. Bennett, V. Patel and S. Siksek, Shifted powers in Lucas-Lehmer sequences, Res. Number Theory 5 (1) (2019), 15.
\bibitem{BL14} J.J. Bravo and F. Luca, Powers of two as sums of two Lucas numbers, J. Integer Seq. 17 (8) (2014), art. 14.8.3, 12 pp.
\bibitem{BL15} E.F. Bravo and J.J. Bravo, Powers of two as sums of three Fibonacci numbers, Lith. Math. J. 55 (3) (2015), 301-311.
\bibitem{BMS6} Y. Bugeaud, M. Mignotte and S. Siksek, Classical and modular approaches to exponential Diophantine equations, I. Fibonacci and Lucas perfect powers, Ann. of Math. 163 (3) (2006), 969-1018.
\bibitem{BLMS7} Y. Bugeaud, F. Luca, M. Mignotte and S. Siksek, Perfect powers from products of terms in Lucas sequences, J. Reine Angew. Math. 661 (2007), 109-129.
\bibitem{BLMS8} Y. Bugeaud, F. Luca, M. Mignotte and S. Siksek, Fibonacci numbers at most one away from a perfect power, Elem. Math. 63 (2008), 65-75.
\bibitem{LD} L. Debnath, A short history of the Fibonacci and golden numbers with their applications, Internat. J. Math. Ed. Sci. Tech. 42 (2011), 337-367.
\bibitem{DK} B. Demirtürk Bitim and R. Keskin, On the solutions of the Diophantine equation $F_n-F_m=3^a$, Proc. Indian Acad. Sci. Math. Sci. 129 (1) (2019), 81.
\bibitem{DP} A. Dujella and A. Peth\H{o}, A generalization of a theorem of Baker and Davenport, Q. J. Math. 49 (3) (1998), 291-206.
\bibitem{EK} F. Erduvan and R. Keskin, non-negative integer solutions of the equation $F_n-F_m=5^a$, Turkish J. Math. 43 (2019),  1115-1123.
\bibitem{L} F. Luca, Repdigits as sums of three Fibonacci numbers, Math. Commun. 17 (2012), 1-11.
\bibitem{LP} F. Luca and V. Patel, On perfect powers that are sums of two Fibonacci numbers, Int. J. Number Theory 189 (2018), 90-98.
\bibitem{LS} F. Luca and S. Siksek, On factorials expressible as sums of at most three Fibonacci numbers, Proc. Edinb. Math. Soc. (2) 53 (3) (2010), 747-763.
\bibitem{EMM} E.M. Matveev, An explicit lower bound for a homogeneous rational linear form in the logarithms of algebraic numbers, II, Izv. Ross. Akad. Nauk Ser. Mat. 64 (6) (2000), 125-180 (Russian). Translation in Izvestiya Mathematics 64 (6) (2000), 1217-1269.
\bibitem{SK} Z. \c{S}iar and R. Keskin, On the equation $F_n-F_m=2^a$, Colloq. Math. 159 (1) (2020), 119-126.

\end{thebibliography}
\end{document}